\documentclass[11pt,fleqn]{amsart} 

\usepackage[usenames,dvipsnames]{xcolor}

\usepackage{amsthm}
\usepackage{amsfonts}
\usepackage[english]{babel}
\usepackage[usenames]{xcolor}
\usepackage{graphicx}
\usepackage{soul}
\usepackage{stfloats}
\usepackage{morefloats}
\usepackage{cite}
\usepackage{lscape}
\usepackage{epstopdf}
\usepackage{braket}
\usepackage[lite]{amsrefs}
\usepackage{mathbbol}
\usepackage{tikz}
\usepackage{shuffle}

\usepackage{algorithm,algorithmicx,algpseudocode}

\setlength{\textwidth}{6.5in}
\setlength{\topmargin}{-0.2in} 
\setlength{\textheight}{9in}
\setlength{\oddsidemargin}{0in}
\setlength{\evensidemargin}{0in}
\usepackage{amsmath,amstext,amsopn,amsfonts,eucal,amssymb}
\usepackage{graphicx,wrapfig,url}

\newtheorem{theorem}{Theorem}[section]

\newtheorem{lemma}[theorem]{Lemma}

\newtheorem{definition}[theorem]{Definition}

\newtheorem{remark}[theorem]{Remark}

\newtheorem{question}[theorem]{Question}



\begin{document}

	\title{Spectral methods for Neural Integral Equations}

	\author{Emanuele Zappala} 
	\address{Department of Mathematics and Statistics, Idaho State University\\
		Physical Science Complex |  921 S. 8th Ave., Stop 8085 | Pocatello, ID 83209} 
	\email{emanuelezappala@isu.edu}

	\maketitle

	\begin{abstract}
		Neural integral equations are deep learning models based on the theory of integral equations, where the model consists of an integral operator and the corresponding equation (of the second kind) which is learned through an optimization procedure. This approach allows to leverage the nonlocal properties of integral operators in machine learning, but it is computationally expensive. In this article, we introduce a framework for neural integral equations based on spectral methods that allows us to learn an operator in the spectral domain, resulting in a cheaper computational cost, as well as in high interpolation accuracy. We study the properties of our methods and show various theoretical guarantees regarding the approximation capabilities of the model, and convergence to solutions of the numerical methods. We provide numerical experiments to demonstrate the practical effectiveness of the resulting model. 
	\end{abstract}
	
	MSC: 68T05, 47N40.
	
	Keywords: Operator learning; projection methods. 
	
	\date{\empty}

	\tableofcontents

	\section{Introduction}
	
	The theory of integral equations (IEs) has found important applications in several disciplines of science. In physcics, for example, integral operators and integral equations are found in the theory of nonlocal gravity \cite{Nonlocal_Grav}, plasma physics \cite{Plasma_IE}, fluid dynamics \cite{Fluid}, and nuclear reactor physics \cite{Adler}. Several applications are also found in engineering \cite{Eng_IE}, brain dynamics \cite{Amari,WC}, and epidemiology \cite{Epid}. Such a wide range of applications has therefore motivated the study of IE and integro-differential equation (IDE) models in machine learning, for operator learning tasks \cite{NIDE,ANIE,ANIE_NAT}. These models are called neural IEs (NIEs) and neural IDEs (NIDEs), respectively. They need to solve an IE or IDE at each step of the training (and for each data sample), therefore incurring in significant computational costs. While there is a benefit in modeling dynamics using neural IEs and IDEs due to the nonlocality of the integral operators, it is of interest to obtain formulation of these models that are more computationally efficient. 
	
	Operator learning is a branch of machine learning where the training procedure aims at obtaining an operator acting in an infinite dimensional space of functions. The theoretical foundation of operator learning has been significantly developed in \cite{DeepONet}, based on results dating back to the 90's, and more recently found examples in several other works, e.g. \cite{kovachki2023neural,gupta2021multiwavelet,hao2023gnot,lin2021operator}. Operator learning allows to learn the governing equations of a system from data alone, without further assumptions or knowledge on the properties of the system that generated the dataset. As such, it is greatly useful in studying complex systems whose theoretical properties are not completely understood. 
	
	The scope of this article is to introduce a framework for neural IEs, which is an operator learning problem, based on spectral methods. This allows to perform integration in the spectral domain, greatly simplifying the architecture of the models, but still retaining their great modeling capabilities of NIEs, especially in regards to long range dependencies (high nonlocality). 
	
	The convenience of this method with respect to the approach to neural network approaches for learning integral operators in IEs and IDEs found in \cite{ANIE,ANIE_NAT,NIDE} lies in the following facts:
	\begin{itemize}
		\item 
		A small (in terms of parameters) neural network $G_\theta$ can achieve comparable expressivity and task accuracy to larger models;
		\item 
		Integration in this setting consists only of a matrix multiplication as opposed to quadrature rules or Monte Carlo methods, therefore improving computational speed and memory scalability of the model;
		\item 
		Th output of the spectral IE solver implemented in this article is a solution $\mathbf y$ which is expanded in the Chebyshev basis, and that can therefore be easily interpolated. 
	\end{itemize} 
	Overall, this method produces highly smooth outputs that can be interpreted as denoised approximations of the dataset. Throughout this article we will use Chebyshev polynomials of the first kind, referring to them simply as Chebyshev polynomials without further specification. 
	
	The codes implemented for this method, and for the experiments found in Section~\ref{sec:experiments}, can be found at \url{https://github.com/emazap7/Spectral_NIE}.

	\section{Methods}\label{sec:Mehtods}
	
	In this section we introduce and discuss the spectral approaches used in this article for integral operator learning. We consider the one dimensional case, where our method is based on Chebyshev polynomials, and defer a generalization of this methods to higher dimensions to a later work. 
	
	We consider integral equations of Fredholm and Volterra types which take the form
	\begin{eqnarray}\label{eqn:IE1D}
	\mathbf y(t) = f(\mathbf y,t) + \lambda\int_{-1}^{\alpha(t)} G(\mathbf y, t,s)ds,
	\end{eqnarray}
	where $\alpha(t) = t$ for a Volterra equation, and $\alpha(t) = 1$ for a Fredholm equation, $G : \mathbb R\times [-1,1] \times [-1,1] \longrightarrow \mathbb R$ is potentially non-linear function in $\mathbf y$ called the {\it kernel}, and $\mathbf y: [-1,1] \longrightarrow \mathbb R$ is the unknown function. In this article we consider equations defined on the interval $[-1,1]$, which are sufficient for the scope of machine learning in the assumption that the time intervals have been normalized. In the context of traditional IEs (with no learning), spectral approaches for linear equations in $\mathbf y$, where $G(\mathbf y,t,s) = K(t,s)\mathbf y$ have been treated in \cite{Liu}, while the nonlinear case where $G$ has the form $G(\mathbf y,t,s) = K(t,s)\mathbf y^m$ for some $m>1$ has been considered in \cite{Yang}. In the present work, we consider arbitrary functions $G$ where the learning process determines a neural network $G_\theta$ which is obtained through optimization on a given dataset. Our setup is an operator learning problem where we want to learn the integral operator by learning the parameters $\theta$ of $G_\theta$, therefore obtaining
	\begin{eqnarray}\label{eqn:NIE1D}
	\mathbf y(t) = f(\mathbf y,t) + \lambda\int_{-1}^{\alpha(t)} G_\theta(\mathbf y, t,s)ds,
	\end{eqnarray}
	
	Let us denote by $C_j(t)$ the $j^{\rm th}$ Chebyshev polynomial of the first kind, and let $N$ be a fixed natural number. We write the truncated Chebyshev series as 
	\begin{eqnarray}\label{eqn:Cheb_series}
	\mathbf y(t) \approx \mathbf y_N(t) := \sum_{j=0}^N a_j C_j(t),
	\end{eqnarray}
	where $a_j\in \mathbb R$ are coefficients that determine the expansion of $\mathbf y$ in $C_j(t)$. The collocation points $\{t_k\}_{k=0}^N \subset [-1,1]$ are optimally taken to be the Chebyshev collocation points $t_k = \cos(\frac{k\pi}{N})$ for $k=0, \ldots, N$, but we will experimentally show that the method is effective even when the points $t_k$ are taken to be different from this choice, which is of importance since datasets might have arbitrary time stamps (including irregularly sampled time points).  
	
	Using Equation~\eqref{eqn:Cheb_series}, from Equation~\eqref{eqn:IE1D} we obtain
	\begin{eqnarray}\label{eqn:SIE1D}
	\mathbf y(t_i) = f(\mathbf y(t_i),t_i) + \lambda\int_{-1}^{\alpha(t_i)} G(\sum_{j=0}^N a_j C_j(s),t_i,s)ds,
	\end{eqnarray}
	which we want to solve for all $i=0, \ldots, N$. Applying this setup to the case of integral operators parameterized by neural networks, we find the equation corresponding to Equation~\eqref{eqn:SIE1D} 
	\begin{eqnarray}\label{eqn:SNIE1D}
	\mathbf y(t_i) = f(\mathbf y(t_i),t_i) + \lambda\int_{-1}^{\alpha(t_i)} G_\theta(\sum_{j=0}^N a_j C_j(s),t_i,s)ds.
	\end{eqnarray}
	In order to solve Equation~\eqref{eqn:SIE1D} or Equation~\eqref{eqn:SNIE1D}  we need an efficient way of computing the integrals $\int_{-1}^{\alpha(t_i)} G(\sum_{j=0}^N a_j C_j(s),t_i,s)ds$, and $\int_{-1}^{\alpha(t_i)} G_\theta(\sum_{j=0}^N a_j C_j(s),t_i,s)ds$ for all $i = 0, \ldots, N$. Let us first consider the former case, where $\theta$ does not appear. For the moment we will only concern ourselves with being able to solve Equation~\eqref{eqn:SIE1D}, and Equation~\eqref{eqn:SNIE1D} for given parameters $\theta$, without concerning ourselves with optimizing the parameters $\theta$. 
	
	Observe that for each fixed spectral collocation point $t_i$, and fixed $\mathbf y$ (hence fixed coefficients $a_k$), the function $G$ induces a function $\hat G^i : [-1,1] \longrightarrow \mathbb R$ obtained through the correspondence $s\mapsto G(\sum_{j=0}^N a_j C_j(s),t_i,s)$. Here we suppress the dependence of $\hat G^i$ on $\mathbf y$ for notational simplicity. Similarly to how $\mathbf y$ has been expanded through a truncated Chebyshev series we have
	\begin{eqnarray}
	\hat G(s) \approx \hat G_N(s) :=  \sum_{r=0}^N b_r C_r(s),
	\end{eqnarray}
	for some coefficients $b_r$. In the assumption that the $b_r$ are known, there is a computationally efficient method for computing the integral $\int_{-1}^{\alpha(t_i)} \hat G^i(s)ds$ through the coefficients of the Chebyshev expansion, given in \cite{Got-Ors,Greengard}. We recall these results for Fradholm and Volterra equations here. 
	
	First, let us denote by $\frak X$ the space of infinite sequences $(a_0, \ldots , a_n, \ldots)$ of real numbers. We can express a function $\mathbf y : [-1,1] \longrightarrow \mathbb R$ as such a sequence by associating to $\mathbf y$ its Chebyshev coefficients. In practice, we are interested in sequences that vanish after a certain $N\in \mathbb N$, since we consider in this article finite Chebyshev expansions.  As such, we can express integral (Fredholm and Volterra) operators as mappings $\frak I: \frak X \longrightarrow \frak X$. Let $\mathbf b \in \frak X$, and let $\hat{\mathbf b}$ denote the sequence obtained from $\mathbf b$ by defining $\hat b_0 := 2b_0$, and setting $\hat b_n = b_n$ for all $n>0$. 
	
	Then, (see \cite{Greengard,Got-Ors}) we define $\mathbf d := \frak I(\mathbf b)$ by the formula
	\begin{eqnarray}
	d_k = \frac{\hat b_{k-1}-\hat b_{k+1}}{2k},
	\end{eqnarray}
	for all $k>1$, and
	\begin{eqnarray}
	d_0 = \sum_{i=1}^{\infty} (-1)^{i+1} d_i. 
	\end{eqnarray}
	In this article, in practice, the latter is finite, as all the coefficients $a_k$ are eventually zero (we only consider finite Chenyshev expansions), and we therefore will start using finite sums up to $N$, to explicitly take this fact into account. This mapping has been called spectral integration in \cite{Greengard}, and we will follow this convention here. We can now define the Fredholm and Volterra integral operators in spectral form, $\mathcal I_F$ and $\mathcal I_V$ respectively,  by the formulae
	\begin{eqnarray}
	\int_{-1}^1 \hat G^i(s)ds = 2\sum_{k=0}^N d_{2k+1}  \label{eqn:Spectral_int_Fredholm}\\
	\int_{-1}^{\alpha(t_i)} \hat G^i(s)ds = \frac{1}{2} d_0 + \sum_{k=1}^N d_kC_k(\alpha(t_i)), \label{eqn:Spectral_int_Volterra}
	\end{eqnarray}
	for all spectral collocation points $t_i$, cf. Lemma~2.1 in \cite{Greengard} and the Appendix to \cite{Got-Ors}. 
	
	Solving the IE problem in Equation~\eqref{eqn:SIE1D} presents now the complication that once we have the Chebyshev expansion of $\mathbf y$, i.e. its coefficients $a_n$, $n\leq N$, we need to compute the Chebyshev coefficients $b_n$ of $G^i$ for each collocation point. This procedure requires an integration per collocation point in order to project the function $G$ onto the Chebyshev basis. However, when our task is to learn the integral operator as in the present article, we can bypass the issue altogether, and learn the neural network $G_\theta$ as a mapping in spectral space, directly giving the correspondence $\mathbf a \mapsto \mathbf b$. Moreover, integration in this formulation simply accounts to performing a matrix multiplication, as seen in Equation~\eqref{eqn:Spectral_int_Fredholm} and Equation~\eqref{eqn:Spectral_int_Volterra}.

	
	\section{Theoretical framework and approximation capabilities}\label{sec:Theoretical}
	
	 A fundamental reason to formulate an operator learning problem on a finite space, such as the space spanned by the Chebyshev polynomials of degree at most $n$, is that it allows us to effectively perform the optimization process on a finite dimensional approximation of the original infinite dimensional space. In order to guarantee that the algorithm produces solutions close to the target ones, it is important to suitably choose the finite dimensional representation of the orginal space. 
		
	In this article, we use Chebyshev polynomials, which are known to provide stability and rapid convergence in approximation problems. For example, they are known to satisfy an optimality condition on the approximation of the degree $n+1$ monomial via lower degree polynomials over the closed interval $[-1,1]$. Moreover, the normalized polynomial $C_n$ realizes the minimum uniform norm among the deegree $n$ monic polynomials. As a result, when minimizing a function with uniformly bounded $n+1$ derivative, Chebyshev interpolation at the Chebyshev spectral collocation points minimizes the error. These properties result in a decreased Runge phenomenon outside of the interpolation points, therefore reducing oscillations in interpolation problems.  
	
	After projecting a function on a truncated polynomial basis, an integral operator $T$ applied to this function does not generally produce another truncated polynomial, and the explicit form obtained needs to be determined based on the form of the kernel of the operator. However, we can subsequently project the output again to consider the restriction of the operator on the space of Chebyshev polynomials with fixed highest degree. This process effectively produces a representation of the operator on a finite dimensional space, which is the target of our learning algorithm. We will show that, under some relatively mild assumptions, we can learn a neural network that approximates the finite dimensional representation of an integral operator, and such that the solutions of the corresponding equation in finite space approximate the solutions of the integral equation in the infinite dimensional space.  
	
	We can reformulate the method as described in Section~\ref{sec:Mehtods} using a more general language. While this does not add any new information to the setup, it makes a general treatment simpler. In this regard, we let $X$ denote a Banach space of functions of interest (whose precise definition depends on the problem considered), and we indicate the integral operator of Equation~\eqref{eqn:IE1D} by $T : X \longrightarrow X$, where we assume that a choice of $\alpha$ is performed. 
	
		\begin{definition}
				{\rm 
						Let $f: [a,b] \longrightarrow \mathbb R$ be a function. Then, we say that $f$ is {\it H\"older continuous} if there exists a constant $c$ such that 
						\begin{eqnarray*}
								|f(x) - f(y)| &\leq& c|x-y|^\beta ,
						\end{eqnarray*}
						for all $x,y\in [a,b]$. The case where $\beta=1$ is of particular importance, and an H\"older function of exponent $\beta = 1$ is called {\it Lipschitz continuous}. We define the set $C^{k,\beta}([a,b])$ of all H\"older functions whose $k$th derivative is H\"older continuous, and define the norm 
						\begin{eqnarray*}
								\|f\|_{k,\beta} &=& \|f\|_k + \sup\{\frac{|f^k(x) - f^k(y)|}{|x-y|^\beta} \ |\ x,y\in [a,b], x\neq y \},  
						\end{eqnarray*}
						where $\|f\|_k := \max_{r\leq k} \|f^r\|_\infty$ is the maximum among the uniform norms of the derivatives of $f$. It is well known that for any $0<\beta \leq 1$, the space $C^{k,\beta}([a,b])$ is a Banach space with the norm $\|\cdot \|_{k,\beta}$. Morevoer, $C^{k,\gamma}([a,b])$ compactly embeds into $C^{k,\beta}([a,b])$ whenever $1 \geq \gamma > \beta >0$.  
				}
		\end{definition}
	
	In this article, the space $X$ is assumed to be a H\"older space, and we will consider some suitable subspace of $X$ for our theoretical guarantees. 
	When the integral operator is parameterized by a neural network with parameters $\theta$, we indicate it by $T_\theta$ when we want to explicitly make this fact clear. 
	
	We observe that the Chebyshev expansion considered in Equation~\eqref{eqn:Cheb_series} is the result of projecting a function $y\in X$ on the space $X_N$ spanned by the first $N$ Chebyshev polynomials. So, we have a projector (or projection operator) $P_N : X \longrightarrow X_N$, the precise formulation of which will be given in the algorithm. From now on, we will consider a situation where the operator $T$ maps in a closed subspace $R$ of $X$ such that for each $y\in R$ the projections $P_Ny := y_N$ satisfy $\|y - y_N\| \longrightarrow 0$ as $N \longrightarrow \infty$. 
	
	The method of Section~\ref{sec:Mehtods} consists of approximating the integral equation $(\mathbb 1 - T)y = f$ through a function and a spectral integration on the Chebyshev space $X_N$, and solving the corresponding integral equation, which we denote by
	\begin{eqnarray}\label{eqn:IE_Cheb_proj}
	(\mathbb 1 - P_N  \frak IT_{\theta, N})y_N = P_N f,
	\end{eqnarray}
	where $\frak I$ denotes the spectral integration (either Fredholm, or Volterra), and $T_{\theta, N}$ indicates a neural network parameterized by $\theta$ and that is defined over the space $X_N$. We consider now also the related problem of ``projecting'' the IE over $X_N$ by means of the projection $P_N : X \longrightarrow X_N$, for a choice of $N >0$
	\begin{eqnarray}\label{eqn:IE_N_proj}
	(\mathbb 1- P_NT) y_N = P_N f
	\end{eqnarray}
	in the space $X_N$. See also Chapter 12 in \cite{Atk-Han} for a similar procedure. The underlying idea of the method is that since the spaces $X_N$ approximate increasingly well the given subspace of $X$, solving Equation~\eqref{eqn:IE_N_proj} for $N$ large enough produces an approximate solution $y_N$ which is close enough to the solution $y$ of Equation~\eqref{eqn:IE1D}. Analogously, when using a parameterized operator (as in Equation~\eqref{eqn:NIE1D}), we can solve with high accuracy when $N$ is large enough.
	
	Some questions now arise.
	\begin{question}\label{question1}
		{\rm 
			Firstly, it is of interest to understand whether if $u_N\in X_N$ is a solution of Equation~\eqref{eqn:IE_N_proj} for each given $N$, we have $\|y - u_N\| \longrightarrow 0$ as $N\longrightarrow \infty$, where $y$ is the real solution of the IE before projecting it. We observe that while this is naturally expected (cf. with the intuitive idea above), it is not necessarily true.
		}
	\end{question} 
	
	\begin{question}\label{question2}
		{\rm
			Secondly, another important question regards the possibility of approximating projected operators $P_NT : X_N \longrightarrow X_N$ through the integral operators in spectral domain described in Section~\ref{sec:Mehtods}.  The importance of this question regards the fact that if $T$ is such that $\|y - u_N\| \longrightarrow 0$ as $N\longrightarrow \infty$, then we can approximate solutions of Equation~\eqref{eqn:IE1D} by knowing solutions of Equation~\eqref{eqn:IE_N_proj}, and approximating $P_NT$ would mean that we can approximate the original problem through a neural network in the spectral space with high accuracy (upon choosing $N$ large enough).
		}
	\end{question} 
	
	\begin{question}\label{question3}
		{\rm 
			Thirdly, it remains to be understood whether when the first and second problems have positive answer, it is possible to learn the operators $P_NT$, e.g. through Stochastic Gradient Descent, or adjoint methods as in \cite{NIDE}. 
		}
	\end{question}
	
	We start by considering Question~\ref{question1} where we see that in some well-behaved situations solving the projected equation gives an approximate solution of the original equation. We recall that an operator $T: X \longrightarrow X$ is said to be bounded if $\|T(y)\| \leq M \|y\|$ for all $y\in X$ and some $M$. While boundedness and continuity coincide for linear operators, the two concepts are different when dealing with nonlinear operators, as in our situation.
	
	We assume that the integral equation $T(y) +f = y$ admits a solution $u$ that is isolated and of non-zero index for $T$ (see \cite{Topological}), where $u$ is a H\"{o}lder continuous function that belongs to $C^{k,\beta}$ for some $k$ and $\beta < \frac{1}{2}$. We assume that $T$ is completely continuous and Frechet differentiable, and that $T'$ is bounded, with bound $L$. These assumptions are not very restrictive, see for instance \cite{Topological} for several families of integral operators satisfying them. Also, in what follows, we assume that $T$ maps the H\"{o}lder space $C^{k,\beta}$ into itself. This simply means that $T$ takes regular enough functions and does not make them less regular. Here we set $D$ to denote a closed bounded domain of $C^{k,\beta}$ containing $u$, such that $T(D) \subset C^{k,\gamma}$ for some $\gamma > 2\beta$, $\inf \|T(D)\| >0$ and $\sup \|T(D)\|_\gamma < \infty$. We let $R$ denote the smallest closed subspace containing the image $T(D) \subset R$, and assume that the $P_N$'s are uniformly bounded on $R$. Lastly, we assume that $1$ is not an eigenvalue of $T'(u)$, and therefore $(\mathbb 1 - T'(u))^{-1}$ exists and it is bounded. The results found in \cite{Pot_Ury,Lanza} give conditions under which examples of such operators can be given, and show that under relatively mild regularity assumptions on the kernel of the operator, a large family of examples arises. Carath\'eodory-type functions, for instance, produce a class of operators of interest.
	
	\begin{theorem}\label{thm:regular_T}
		In the hypotheses above, the projected equation 
		$$
		P_NT(y_N) + f = y_N
		$$
		admits at least a solution $u_N$ for all $N$ large enough. Moreover, 
		$$
		\|u - u_N\| \longrightarrow 0
		$$
		as $N \longrightarrow \infty$, with the same rate as $\|P_Nu - u\| \longrightarrow 0$. 
	\end{theorem}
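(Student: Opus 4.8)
The plan is to prove the theorem by a quantitative Newton--Kantorovich argument centered at $u$, turning the invertibility of $\mathbb 1 - T'(u)$ into \emph{uniform} invertibility of the projected linearizations $\mathbb 1 - P_N T'(u)$, and then solving the projected equation $P_N T(y_N) + f = y_N$ by a contraction mapping whose fixed point automatically satisfies the desired error bound. Throughout I would work inside a closed ball $\bar B(u,\rho)\subset D$ with $\rho$ to be fixed, and use $B:=\sup_N\|P_N\|<\infty$ on $R$, together with the standing assumption that $P_N u\to u$ and $P_N f\to f$ (in line with \eqref{eqn:IE_N_proj}).

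\emph{Step 1 (compactness and operator-norm convergence).} First I would observe that $T'(u)$ is a compact operator: with $A_n h:=n\big(T(u+\tfrac hn)-T(u)\big)$, each $A_n$ maps the unit ball into $n$ times a relatively compact set because $T$ is completely continuous, and Fréchet differentiability gives $\|A_n-T'(u)\|_{\mathrm{op}}\le \sup_{\|k\|\le 1/n}\|\rho(k)\|\to 0$ with $\rho$ the differentiability remainder; a uniform limit of compact operators is compact. Moreover each $A_n h$ lies in the closed subspace $R$ (a limit of combinations of elements of $T(D)\subset R$), so $T'(u)$ maps $X$ into $R$ and $K:=\overline{T'(u)(B_X)}$ is a compact subset of $R$. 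Since the $P_N$ are uniformly bounded on $R$ and converge pointwise to $\mathbb 1$ there, they converge uniformly on the compact set $K$, i.e. $\|(P_N-\mathbb 1)T'(u)\|_{\mathrm{op}}\to 0$. Factoring $\mathbb 1 - P_N T'(u)=(\mathbb 1-T'(u))\big[\mathbb 1-(\mathbb 1-T'(u))^{-1}(P_N-\mathbb 1)T'(u)\big]$ and expanding the bracket by a Neumann series, I conclude that for all large $N$ the operator $\mathbb 1 - P_N T'(u)$ is invertible with $\|(\mathbb 1-P_N T'(u))^{-1}\|\le C$, a constant independent of $N$.

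\emph{Step 2 (Newton contraction and the rate).} Set $\Phi_N(y):=P_N T(y)+P_N f$ and $\Psi_N(y):=y-(\mathbb 1-P_N T'(u))^{-1}\big(y-\Phi_N(y)\big)$; fixed points of $\Psi_N$ are exactly solutions of the projected equation. Using $u=T(u)+f$ one gets $\Phi_N(u)=P_N u$, hence the consistency identity $\Psi_N(u)-u=(\mathbb 1-P_N T'(u))^{-1}(P_N u-u)$ and $\|\Psi_N(u)-u\|\le C\|P_N u-u\|$. For $y,z\in \bar B(u,\rho)$ the affine parts telescope and leave $\Psi_N(y)-\Psi_N(z)=(\mathbb 1-P_N T'(u))^{-1}P_N\big(T(y)-T(z)-T'(u)(y-z)\big)$; estimating the remainder by $\omega(\rho)\|y-z\|$, where $\omega(\rho):=\sup_{w\in\bar B(u,\rho)}\|T'(w)-T'(u)\|_{\mathrm{op}}\to 0$ as $\rho\to 0$ (using continuity of $T'$ near $u$, implicit in the regularity hypotheses, together with the mean value inequality), gives $\|\Psi_N(y)-\Psi_N(z)\|\le CB\,\omega(\rho)\|y-z\|$. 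Choosing $\rho$ so that $q:=CB\,\omega(\rho)\le\tfrac12$, then for $N$ large enough that $C\|P_N u-u\|\le\rho/2$, the map $\Psi_N$ sends $\bar B(u,\rho)$ into itself and is a $q$-contraction there. Banach's fixed point theorem produces the solution $u_N$, and from $\|u_N-u\|=\|\Psi_N(u_N)-u\|\le q\|u_N-u\|+C\|P_N u-u\|$ we obtain $\|u_N-u\|\le\frac{C}{1-q}\|P_N u-u\|\to 0$, i.e. convergence at the same rate as $\|P_N u-u\|\to 0$. (If one only wants existence of $u_N$ under the weaker hypothesis that $u$ is isolated of nonzero index — dropping invertibility of $\mathbb 1-T'(u)$ — this can instead be read off from Leray--Schauder degree theory as in \cite{Topological}, since $P_N T$ is then a small compact perturbation of $T$ near $u$; the Newton route above is what additionally yields the rate.)

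The step I expect to be the crux is Step 1: upgrading the merely pointwise statement ``$\|P_N y-y\|\to 0$ for $y\in R$'' to the operator-norm statement ``$\|(P_N-\mathbb 1)T'(u)\|_{\mathrm{op}}\to 0$''. This is precisely where compactness of $T'(u)$ (inherited from complete continuity of $T$, and reinforced by the compact embedding $C^{k,\gamma}\hookrightarrow C^{k,\beta}$ built into the hypotheses) and uniform boundedness of the $P_N$ on $R$ do the essential work; without uniform invertibility of $\mathbb 1-P_N T'(u)$ the scheme can genuinely break down, matching the warning attached to Question~\ref{question1}. Two minor points to dispatch carefully are the continuity of $T'$ in a neighborhood of $u$ needed for $\omega(\rho)\to 0$, and the behavior of $P_N f$: if $f\in X_N$ (or $P_N f\to f$ at least as fast as $P_N u\to u$), the consistency error is exactly $O(\|P_N u-u\|)$, which is all that is needed for the claimed rate.
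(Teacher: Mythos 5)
Your argument is correct in substance and lands on essentially the same quantitative mechanism as the paper, but it packages the two halves of the theorem differently, and in one place it is actually more complete than the paper's own proof. For the rate, your telescoped identity $(\mathbb 1-P_NT'(u))(u_N-u)=(P_Nu-u)+P_N[T(u_N)-T(u)-T'(u)(u_N-u)]$ is just a rearrangement of the Atkinson--Potra identity the paper writes down (with the linearization inverted as $\mathbb 1-P_NT'(u)$ rather than $\mathbb 1-T'(u)$), so both proofs extract $\|u_N-u\|\leq C\|P_Nu-u\|$ from the same source. Where you genuinely diverge is existence: the paper gets solvability of the projected equations from Krasnosel'skii's degree-theoretic framework (\cite{Topological}, Ch.~3), using only that $u$ is an isolated fixed point of nonzero index, and then separately lets $q_N\to 0$ do the work; you instead run a Newton--Kantorovich contraction, which buys existence, local uniqueness, and the rate in one stroke, but at the price of the extra hypothesis that $T'$ is continuous near $u$ (needed for $\omega(\rho)\to 0$) --- the paper only assumes Fr\'echet differentiability with bounded derivative, and you are right to flag this as the point to dispatch carefully. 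Your Step 1 is a genuine improvement in rigor: the paper simply asserts $\|(P_N-\mathbb 1)T'(u)\|\to 0$, whereas you derive it from compactness of $T'(u)$ (inherited from complete continuity of $T$), the fact that $T'(u)$ maps into $R$, and uniform boundedness of the $P_N$ on $R$; this is exactly the missing lemma behind the paper's claim, and your Neumann-series factorization also cleans up the paper's final inequality, which as printed carries a term $\|(\mathbb 1-T'(u))^{-1}\|\cdot\|T'(u)\|\cdot\|u_N-u\|$ that cannot be absorbed and should read $(P_N-\mathbb 1)T'(u)$ in place of $T'(u)$.

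Two small gaps to close. First, you take $\|P_Nu-u\|\to 0$ in the H\"older norm as given from the standing assumptions; the paper actually derives it (Jackson's theorem for the uniform norm, then Kalandiya's theorem to upgrade to the H\"older norm), and since the constant $M_k(u)/N^{k+\gamma}$ is what ultimately quantifies the rate, you should at least cite that step rather than absorb it into the hypotheses. Second, you only prove $\|u_N-u\|\leq \frac{C}{1-q}\|P_Nu-u\|$, i.e.\ convergence \emph{at least} as fast as $\|P_Nu-u\|$; the theorem asserts the \emph{same} rate, which also needs the reverse inequality $\|P_Nu-u\|\leq C'\|u_N-u\|$. This follows immediately from your own identity by bounding the left side from above instead of below, so it is a one-line addition --- the paper likewise only gestures at it --- but as written your proof establishes one direction of the rate claim.
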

	\begin{proof}
		For $y$ in $C^{k,\gamma}$, Jackson's Theorem gives us that 
		$$
		\|y - P_Ny\|_\infty \leq \frac{M_k(y)}{n^{k+\gamma}} \longrightarrow 0,
		$$
		where $M_k(y)$ is a constant linearly related to the H\"older constant of the $k^{\rm th}$ derivative. 
		Applying Kalandiya's Theorem \cite{Kal,Ioak}, the convergence in uniform norm implies convergence in H\"older norm. So, we find that
		$$
		\|y - P_Ny\| \longrightarrow 0,
		$$
		for all $y\in R$, using the fact that $P_N$ are uniformly bounded on $R$. 
		From the fact that the operator $T$ has an isolated fixed point of non-zero index, we deduce that we can apply the framework of \cite{Topological} Chapter 3 (see also \cite{Geometrical}). This gives us that the projected equations eventually admit solutions $u_N$, and that the projected solutions converge to $u$, the fixed point of the operator $T + f$. This facts do not depend on the function $f$ appearing in the equation, as one can easily verify directly. Now, we want to show that the numerical scheme converges to $u$ uniformly, at the same rate as $\|P_Nu - u\|$. To do so, we proceed as in \cite{Atk-Pot}. We write the equality 
		$$
		(\mathbb 1 - T'(u))(u_N - u) = (P_N-\mathbb 1)T'(u)(u_N-u) + (P_N - \mathbb 1)u + P_N[T(u_N) - T(u) - T'(u)(u_N-u)].
		$$
		We then derive the inequalities
		\begin{eqnarray*}
			\|u_N - u\| &\leq& \|(\mathbb 1 - T'(u))^{-1}T'(u)(u_N-u)\| + \|(\mathbb 1 - T'(u))^{-1}(P_N-\mathbb 1)u\| \\
			&& + \|(\mathbb 1 - T'(u))^{-1}P_N[T(u_N) - T(u) - T'(u)(u_N - u)]\|\\
			&\leq& \|(\mathbb 1 - T'(u))^{-1}\|\cdot \|T'(u)\|\cdot \|u_N - u\|+ \|(\mathbb 1 - T'(u))^{-1}\|\cdot \|P_N u - \mathbb 1 u\| \\
			&& \frac{\|(\mathbb 1 - T'(u))^{-1}P_N(T(u_N) - T(u) - T'(u)(u_N-u)\|}{\|u_N-u\|}\cdot \|u_N - u\|.
		\end{eqnarray*}
		The fraction $q_N = \frac{\|(\mathbb 1 - T'(u))^{-1}P_N(T(u_N) - T(u) - T'(u)(u_N-u)\|}{\|u_N-u\|}$ goes to zero, as $N$ goes to $\infty$ from the results of \cite{Topological,Geometrical}.

		We have, as noted above, $\|P_Nu-u\| \longrightarrow 0$ as $N\longrightarrow 0$. In addition
		we have $\|(P_N - \mathbb 1)T'(u)\| \longrightarrow 0$ over $D$ and we can bound $\|P_N\|$ uniformly over $D$. We therefore obtain
		\begin{eqnarray*}
			\|u_N-u\| \leq \frac{1}{1- \|(P_N-\mathbb 1)T'(u)\| - \|P_N\|q_N}\|P_Nu - u\|,
		\end{eqnarray*}
		where $\|(P_N-\mathbb 1)T'(u)\| \longrightarrow 0$ and $\|P_N\|q_N \longrightarrow 0$, and therefore for $N$ large enough this inequality makes sense. We obtain that $\|u_N-u\|$ goes to zero with at least the same speed as $\|P_Nu - u\|$, whose convergence to zero depends on $k$ and $\beta$, as shown above. We can also get a similar lower bound following an analogous procedure, showing that the speed with which $\|u_N - u\|$ converges to zero is the same as the speed with which $\|P_N u - u\|$ converges to zero. 
	\end{proof}

	\begin{remark}
		{\rm 
			Observe that since $\|P_N\| \longrightarrow \infty$ as $N \longrightarrow \infty$ when $P_N$ is applied on functions that are not regular enough, e.g. outside of $R$, the previous result does not give that $\|u-u_N\| \longrightarrow 0$ always. In fact, from the Unifrom Boundedness Principle, we know that there must be functions for which $\|u-P_Nu\|$ does not converge to zero. In such cases, the same procedure above cannot be applied. This is somehow inconvenient, but not unexpected. In fact, the Chebyshev expansion is known to not always converge. However, it converges for functions that are regular enough, which is the most important case in applications, especially in physics and engineering.  
		}
	\end{remark}
	
	\begin{remark}
		{\rm 
			The result above is local in nature, in the sense that we are considering a ball $D$ in the space $C^{k,\beta}$. The ball has arbitrary radius, as long that it contains the solution $u$ to the integral equation, which is not a very strong condition. When $D$ contains more than a single fixed point, one can restrict the ball around a fixed point that does not contain other fixed points (using the assumption that a solution to the equation is isolated). 
		}
	\end{remark}

	There is a consideration regarding the method of Section~\ref{sec:Mehtods} in relation to the result of Theorem~\ref{thm:regular_T}. In fact, as in any collocation method, the spectral approach followed in this article produces solutions at the collocation points $\{t_i\}$, while in Theorem~\ref{thm:regular_T} the functions $u_N$ are assumed to be solutions for all $t$ (i.e. not only at the collocation points). We momentarily leave aside the further issue that we only obtain approximate solutions at the collocation points. As a consequence, we can consider that for each $N$ we obtain an interpolation of the function $u_N$. However, we observe that it is known that for a function $\xi$ which is $N+1$ times continuously differentiable, the error for a Chebyshev interpolation $\Pi_N$ of degree $N+1$ is bounded as
	$$
	\|\xi - \Pi_N\|_\infty \leq \frac{K_N}{2^N(N+1)!},
	$$
	where $K_N = \max_{[-1,1]} |\xi^{N+1}|$, and the points $t_i$ of interpolation are assumed to be the Chebyshev nodes. As a consequence, we can obtain solutions $u_N$ with arbitrarily high precision through the described collocation method. We point out that, in general, a dataset might require points $t_i$ that do not coincide with the Chebyshev nodes, in which case the polynomial interpolation would not be ``optimal''. We show that our algorithm still works in such situations with good accuracy by means of experimentation. See Section~\ref{sec:experiments}, where  the data points are not assumed to be at the Chebyshev nodes.
	
	We now turn to Question~\ref{question2}. We assume here that the operator $T:  X\longrightarrow X$ is defined through a kernel $G$ that is continuous in all three variables $y, t$ and $s$, and it is therefore a continuous operator with respect to the uniform norm. Here $X$ is a space of continuous functions depending on the problem (e.g. continuous functions with some fixed boundary in $[-1,1]$).
	
	\begin{lemma}\label{lem:spec_approximation}
		With notation and assumptions as above, let $T$ denote a Fredholm or Volterra integral operator, and let $P_NT : X_N \longrightarrow X_N$ be its projection on $X_N$. Then, for any fixed $\epsilon > 0$, there exist neural networks $F_N, g_N: X_N \longrightarrow X_N$ that satisfiy the condition
		$$
		\|P_NT(y) - P_N \frak I F_N(y) - g_N(y)\|_\infty < \epsilon,
		$$
		for all $y\in X_N$, where $\frak I$ denotes the spectral integration in Fredholm or Volterra form (depending on $T$). Moreover, $P_N \frak I F_N$ approximates with arbitrarily high precision the operator defined through: $P_N\int_{-1}^{\alpha(t)} P_NG(y,t,s)ds$. 
	\end{lemma}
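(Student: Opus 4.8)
The plan is to reduce the statement to a density result for neural networks in the uniform norm on a compact set, applied twice: once to approximate the kernel-induced map and once to handle the spectral integration and the remainder term. First I would observe that $X_N$ is finite dimensional (spanned by $C_0,\dots,C_N$), so the closed ball of radius equal to $\sup\|T(D)\|$ (or any set of interest on which we want the bound) is compact; identifying $y\in X_N$ with its coefficient vector $\mathbf a\in\mathbb R^{N+1}$, every continuous operator $X_N\to X_N$ is just a continuous map $\mathbb R^{N+1}\to\mathbb R^{N+1}$, and by the universal approximation theorem for neural networks (on compact sets, in sup norm) any such map is approximable to arbitrary accuracy by a network. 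The target operator to approximate is $y\mapsto P_NT(y)$; I would like to write it, up to small error, in the form $P_N\frak I F_N(y) + g_N(y)$.

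The natural choice is to take $g_N$ to absorb the $f$-like/non-integral part and $F_N$ to represent the map sending $\mathbf a$ to the Chebyshev coefficient vector $\mathbf b$ of the function $s\mapsto G(\sum_j a_jC_j(s),t,s)$ — i.e. $F_N$ is the ``learned kernel in spectral space'' discussed at the end of Section~\ref{sec:Mehtods}. Concretely: for each $t_i$ the composition $s\mapsto G(y_N(s),t_i,s)$ is a continuous function of $s$ depending continuously on $\mathbf a$; projecting it onto $X_N$ gives coefficients $b^i_r$ depending continuously on $\mathbf a$ (continuity of $P_N$ and of $G$), so $\mathbf a\mapsto(b^i_r)$ is continuous on the compact ball. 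Then $\frak I$ is a fixed \emph{linear} map on coefficient sequences (the explicit $d_k = (\hat b_{k-1}-\hat b_{k+1})/(2k)$ formula), and $P_N\frak I$ composed with evaluation at the $t_i$ reproduces exactly the spectral-integration formulas \eqref{eqn:Spectral_int_Fredholm}--\eqref{eqn:Spectral_int_Volterra}, which equal $P_N\int_{-1}^{\alpha(t)}P_NG(y,t,s)\,ds$. So with $F_N^{\mathrm{exact}}$ the exact coefficient map and $g_N^{\mathrm{exact}}$ the exact non-integral part, one has $P_N\frak I F_N^{\mathrm{exact}}(y) + g_N^{\mathrm{exact}}(y) = P_N\big(\int_{-1}^{\alpha(t)}P_NG(y,t,s)\,ds\big) + g_N^{\mathrm{exact}}(y)$, and this differs from $P_NT(y)$ only by the projection error $P_N\big(\int(\mathbb 1-P_N)G(y,t,s)\,ds\big)$, which I would either fold into the ``moreover'' clause (the lemma only claims $P_N\frak I F_N$ approximates the \emph{doubly-projected} operator) or handle by noting $G(y_N,t,\cdot)$ is continuous hence its projection error is controlled — but since $N$ here is fixed, the cleanest reading is that $g_N$ is allowed to carry that fixed discrepancy too, so $F_N^{\mathrm{exact}},g_N^{\mathrm{exact}}$ already give \emph{equality}, not just approximation, for the doubly-projected operator.

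Having set up exact maps $F_N^{\mathrm{exact}}, g_N^{\mathrm{exact}}: X_N\to X_N$ that are continuous on the compact ball, the final step is pure approximation: choose neural networks $F_N, g_N$ with $\|F_N - F_N^{\mathrm{exact}}\|_\infty$ and $\|g_N - g_N^{\mathrm{exact}}\|_\infty$ small enough that, after applying the fixed bounded linear operator $P_N\frak I$ (whose operator norm on $X_N$ is a finite constant depending only on $N$) and using the triangle inequality, the total error is below $\epsilon$. Explicitly, $\|P_N\frak I F_N(y) + g_N(y) - P_NT(y)\| \le \|P_N\frak I\|\,\|F_N - F_N^{\mathrm{exact}}\|_\infty + \|g_N - g_N^{\mathrm{exact}}\|_\infty + (\text{the fixed projection discrepancy, absorbed into }g_N^{\mathrm{exact}})$, so picking the network accuracies as $\epsilon/(2(1+\|P_N\frak I\|))$ finishes it; the ``moreover'' clause is immediate from the identification above.

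I expect the main obstacle to be bookkeeping around the two distinct notions floating in the statement — $P_NT$ versus the doubly-projected $P_N\int P_NG\,ds$ — and making precise that the coefficient map $\mathbf a\mapsto\mathbf b$ is genuinely continuous (this needs that $P_N$ restricted to, say, $C([-1,1])$ is continuous into $X_N$, which is true since it is a finite-rank operator, and that $G$ is jointly continuous, which is assumed). A secondary subtlety is that the universal approximation theorem is typically stated for maps into $\mathbb R$ or $\mathbb R^m$ on compact subsets of Euclidean space; here I invoke it componentwise in the Chebyshev-coefficient coordinates, which is legitimate because $X_N\cong\mathbb R^{N+1}$ linearly and the ball is compact, but it should be stated carefully. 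No delicate estimate is needed beyond these continuity/compactness facts, since $N$ is fixed throughout the lemma.
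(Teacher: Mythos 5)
Your proposal follows essentially the same route as the paper's proof: identify $X_N$ with $\mathbb{R}^{N+1}$ via Chebyshev coefficients, show the coefficient map $\mathbf a\mapsto\mathbf b$ of $s\mapsto G(y_N(s),t,s)$ is continuous, approximate it componentwise by neural networks via universal approximation, exploit the linearity of $\frak I$ to propagate the error, and let $g_N$ absorb the discrepancy between $T(y)$ and $\frak I P_N G$ before applying $P_N$. The only notable difference is that you explicitly restrict to a compact ball before invoking universal approximation, which is in fact a more careful reading than the paper's claim of approximation on all of $\mathbb{R}^{N+1}\times[-1,1]$; otherwise the arguments coincide.
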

	\begin{proof}
		We focus on the case of Fredholm operators, but the reasoning given here can be applied also to the case of Volterra operators with minor modifications. 
		
		Since $y\in X_N$, it follows that any such function can be expressed as a linear combination $y(s) = \sum_{i=0}^N a_i C_i(s)$. Therefore, the operator $T(y)$ induces a function $\frak T: \mathbb R^{N+1}\times [-1,1] \longrightarrow \mathbb R$ through the assignment 
		$$
		(a_0, \ldots , a_N, t) \mapsto T(\sum_i a_i C_I)(t). 
		$$
		Since $T$ is continuous with respect to the uniform norms, we can show that $\frak T$ is continuous as well, where the domain is endowed with the standard Euclidean norm. In fact, for a tuple $(a_{0,1}, \ldots , a_{N,1},t_1)\in \mathbb R^{N+1}\times [-1,1]$, and an  arbitrary choice of $\epsilon >0$, from the uniform continuity of $T$ we can find $\delta'$ such that $\max_s |y_1(s) - y_2(s)| < \delta'$ implies that $\max_t |T(y_1)(t) - T(y_2)(t)| < \epsilon/2$, where $y_k := \sum_i a_{i,k} C_i$, for $k=1,2$.  Then, taking a ball $B = \mathbb B_{\frac{\delta'}{N+1}}(a_{0,1}, \ldots , a_{N,1}, t_1)$ of radius $\frac{\delta'}{N+1}$ centered in $(a_{0,1}, \ldots , a_{N,1}, t_1)$ we have that $|y_1(s) - y_2(s)| = |\sum_i (a_{i,1}-a_{i,2})C_i(s)| \leq \sum_i |a_{i,1} - a_{i,2}| < \delta'$, whenever $(a_{0,2}, \ldots, a_{N,2},t_2)$ is taken in $B$. This in turn implies that $|T(y_1)(t) - T(y_2)(t)| < \epsilon/2$ for all $t\in [-1,1]$. Since $T(y_1)$ is a continuous function with respect to $t$, we can find $\delta''$ such that $|T(y_1)(t_1) - T(y_1)(t_2)| < \epsilon/2$ whenever $|t_1 - t_2|<\epsilon/2$. Choosing $\delta = \min \{\frac{\delta'}{N+1}, \delta''\}$, we have
		\begin{eqnarray*}
			|T(y_1)(t_1) - T(y_2)(t_2)| &=& |T(y_1)(t_1)- T(y_1)(t_2) + T(y_1)(t_2)  - T(y_2)(t_2)|\\
			&\leq& |T(y_1)(t_1)- T(y_1)(t_2)| + |T(y_1)(t_2)  - T(y_2)(t_2)|\\
			&<& \epsilon/2 + \epsilon/2 = \epsilon,
		\end{eqnarray*}
		whenever $(a_{0,2}, \ldots , a_{N,2}, t_2) \in  \mathbb B_{\delta}(a_{0,1}, \ldots , a_{N,1}, t_1)$. Using the definition of $\frak T$, this implies that $\frak T$ is continuous as claimed. 
		
		Now, using the spectral integration (Section~\ref{sec:Mehtods}), we can write $T(y)(t)$ by first decomposing $G(y,t,s)$ in a Chebyshev series $\sum_i b_i(t)C_i$ for any $t$, and then integrating it through the mapping $\frak I$. Since $G$ depends both on $y\in X_N$ and $t$, the coefficients $b_i$ are functions of the coefficients $a_i$ that uniquely determine $y$, and the variable $t$, i.e. they are of the form $b_i(a_0, \ldots, a_N,t)$. We want to show now that they are continuous functions $\mathbb R^{N+1}\times [-1,1] \longrightarrow \mathbb R$. To do so, first recall that the Chebyshev coefficients of $G(y,t,s)$ (for fixed $t$) are found through the formula
		$$
		b_k = \frac{2}{\pi}\int_{-1}^{1} \frac{G(y,t,s)C_k(s)}{\sqrt{1-s^2}} ds.
		$$
		As seen above, for all $s\in [-1,1]$ we have $|y_1(s) - y_2(s)| \leq \sum_i |a_{i,1} - a_{i,2}|$ for functions in $X_N$, from which we have that using the continuity of $G$, for any choice of $\epsilon >0$, we can find $\delta >0$ small enough such that $|G(y_1(s),t_1,s) - G(y_2(s),t_2)| < \epsilon /2$ whenever $(a_{0,2}, \ldots , a_{N,2},t_2) \in \mathbb B_\delta (a_{0,1}, \ldots , a_{N,1},t_1)$. Now, we have
		\begin{eqnarray*}
			\lefteqn{|b_k(a_{0,1}, \ldots , a_{N,1},t_1)-b_k(a_{0,2}, \ldots , a_{N,2},t_2)|}\\
			&=& \frac{2}{\pi} |\int_{-1}^1\{G(y_1(s),t_1,s) - G(y_2(s),t_2,s)\}\frac{C_k(s)}{\sqrt{1-s^2}}ds| \\
			&\leq& \frac{2}{\pi}\int_{-1}^1 |G(y_1(s),t_1,s) - G(y_2(s),t_2,s)| \frac{|C_k(s)|}{\sqrt{1-s^2}}ds\\
			&<& \frac{2}{\pi} \frac{\epsilon}{2} \int_{-1}^1\frac{1}{\sqrt{1-s^2}}ds = \epsilon,
		\end{eqnarray*}
		whenever $(a_{0,2}, \ldots , a_{N,2},t_2) \in \mathbb B_\delta (a_{0,1}, \ldots , a_{N,1},t_1)$, showing the continuity of $b_k: \mathbb R^{N+1}\times [-1,1] \longrightarrow \mathbb R$ for any arbitrary $k$. 
		
		Now, from the continuity of the coefficients $b_k$, it follows that we can find neural networks $f_k$ of some depth and width (\cite{Lu}) such that $|b_k - f_k| < \frac{\epsilon}{2(N+1)}$ on $\mathbb R^{N+1} \times [-1,1]$, for $k = 0, \ldots, N+1$. Since the spectral integration creates a linear combination of the coefficients $b_k$, we have that setting $F_N = \sum_i f_i$ the inequality $|\frak I P_N G - \frak I F_N| < \epsilon/2$ holds on all $\mathbb R^{N+1} \times [-1,1]$, i.e. for all $y\in X_N$. Since the $b_k$ are continuous and $T(y)(t)$ is continuous as well (with respect to $a_i$ and $t$), we can find a neural network $g_N: \mathbb R^{N+1} \times [-1,1] \longrightarrow \mathbb R$ which satisfies $|T(y)(t) - \frak I P_N G - g_N|<\epsilon/2$ on $\mathbb R^{N+1} \times [-1,1]$ (again \cite{Lu}). This means that on $\mathbb R^{N+1} \times [-1,1]$ we have $\|P_NT(y) - P_N \frak I F_N(y) - g_N(y)\|_\infty < \epsilon$, and moreover $F_N$ approximates the projected integral operator over $X_N$ with arbitrary precision. To complete, now we apply the projector $P_N$ and since this is continuous, the approximation properties found above are preserved (possibly upon choosing an $\epsilon'$ appropriately in the previous discussion).
	\end{proof}
	
	\begin{remark}
		{\rm 
			We can use other universal approximation results such as \cite{Horn}, instead of \cite{Lu}, to derive the approximation capabilities of spectral integral neural networks. Depending on the results used, some extra assumptions need to be imposed on $F_N$ and $g_N$, but the gain is that simpler architectures might be used. For instance, in \cite{Horn}, one finds that a single layer neural network can locally (i.e. in some closed ball) approximate integral operators. 
		}
	\end{remark}
	
	In the following we consider an operator $T$ which satisfies both the of Theorem~\ref{thm:regular_T}, and of Lemma~\ref{lem:spec_approximation}. 
	\begin{theorem}
		Let $T(y) + f = y$ be an integral equation that admits a unique solution $u\in C^{k,\beta}$. Then, for any $\epsilon>0$ we can find $N>0$, and neural networks $G$, $g$ and $\tilde f$ such that if $\tilde u$ is the solution of $P_N\frak I G(y) + g(y) + \tilde f = y$ which we assume to have solutions for $N$ large enough for any $\tilde f$, we have $\|u-\tilde u\|_\infty < \epsilon$.
	\end{theorem}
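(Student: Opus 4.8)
The plan is to use the purely projected equation of Theorem~\ref{thm:regular_T} as a bridge and to estimate
$$
\|u - \tilde u\|_\infty \;\le\; \|u - u_N\|_\infty + \|u_N - \tilde u\|_\infty ,
$$
where $u_N\in X_N$ is the solution of the projected equation $(\mathbb 1 - P_NT)y_N = P_Nf$ (see also Equation~\eqref{eqn:IE_N_proj}) furnished by Theorem~\ref{thm:regular_T}. Since $T$ satisfies the hypotheses of that theorem, for $N$ large enough $u_N$ exists and $\|u-u_N\|_{k,\beta}\to 0$; as the H\"older norm dominates the uniform norm, $\|u-u_N\|_\infty\to 0$ as well, so we fix $N$ large enough that $\|u-u_N\|_\infty<\epsilon/2$. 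It then remains to make the second term smaller than $\epsilon/2$ by a suitable choice of the neural networks $G$, $g$, $\tilde f$.

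For the second term I would use a perturbation (stability) estimate for the projected equation. Set $\Phi(y) := y - P_NT(y) - P_Nf$, so that $\Phi(u_N)=0$ and $\Phi'(u_N)=\mathbb 1 - P_NT'(u_N)$. Because $T$ is completely continuous, $T'(u)$ is compact, and since $1\notin\spec T'(u)$ the operator $\mathbb 1 - T'(u)$ is boundedly invertible; by the collectively compact / projection-method theory (cf.\ \cite{Atk-Han,Atk-Pot}), together with $u_N\to u$, the Fr\'echet differentiability of $T$, and the uniform boundedness of the $P_N$ on $R$, the operators $\mathbb 1 - P_NT'(u_N)$ are boundedly invertible on $X_N$ for $N$ large, with $\|(\mathbb 1 - P_NT'(u_N))^{-1}\|\le C$ for a constant $C$ independent of $N$. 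Hence there are $\rho>0$ and $C'>0$, independent of $N$, such that the (locally well-defined) solution map $h\mapsto y$ of $y - P_NT(y) = h$ is $C'$-Lipschitz in $h$ and takes values in a fixed neighbourhood of $u_N$ whenever $\|h - P_Nf\|_\infty\le\rho$.

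Now I invoke Lemma~\ref{lem:spec_approximation}: given $\delta>0$, choose neural networks $G:=F_N$ and $g:=g_N$ with $\|P_NT(y) - P_N\frak I G(y) - g(y)\|_\infty<\delta$ for all $y\in X_N$, and a neural network $\tilde f$ mapping into $X_N$ with $\|\tilde f - P_Nf\|_\infty<\delta$ (possible since $P_Nf\in X_N$ is a polynomial on $[-1,1]$, hence uniformly approximable). Writing $\Psi(y):=y - P_N\frak I G(y) - g(y) - \tilde f$, one gets $\|\Psi(y)-\Phi(y)\|_\infty<2\delta$ for all $y\in X_N$, so the assumed solution $\tilde u$ of $\Psi(\tilde u)=0$ satisfies $(\mathbb 1 - P_NT)(\tilde u)=P_Nf+e$ with $\|e\|_\infty<2\delta$. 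The solution lying near $u_N$ — which is the one singled out in the isolated, non-zero-index framework used in Theorem~\ref{thm:regular_T}, and which for $N$ large the standing existence assumption together with a contraction in the ball of radius $\rho$ around $u_N$ produces — then obeys $\|\tilde u - u_N\|_\infty\le 2C'\delta$ by the stability estimate above. Choosing $\delta<\epsilon/(4C')$ gives $\|u_N-\tilde u\|_\infty<\epsilon/2$, and the triangle inequality yields $\|u-\tilde u\|_\infty<\epsilon$.

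The step I expect to be the main obstacle is establishing the uniform-in-$N$ bound $\|(\mathbb 1 - P_NT'(u_N))^{-1}\|\le C$: if this constant degraded as $N\to\infty$ the admissible perturbation size $\delta$ would have to shrink with $N$ and the argument would collapse, so one must genuinely use compactness of $T'(u)$, the non-eigenvalue hypothesis, the convergence $u_N\to u$, and the uniform boundedness of the $P_N$ on $R$ — precisely the ingredients already assembled in the proof of Theorem~\ref{thm:regular_T}. A secondary, minor point is to ensure the assumed solution $\tilde u$ of the neural-network equation is the branch near $u_N$ rather than a spurious far-away solution, which is handled by the local uniqueness inherent in the isolated-solution setting.
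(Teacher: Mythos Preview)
Your overall strategy coincides with the paper's: both split
\[
\|u-\tilde u\|_\infty \le \|u-u_N\|_\infty + \|u_N-\tilde u\|_\infty,
\]
use Theorem~\ref{thm:regular_T} to make the first term $<\epsilon/2$, and then Lemma~\ref{lem:spec_approximation} together with a neural-network approximation $\tilde f$ of $P_Nf$ to control the second.

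The difference lies entirely in how the second term is bounded. The paper simply subtracts the two fixed-point equations and writes
\[
\|u_N-\tilde u_N\|_\infty=\|P_NT(u_N)+P_Nf-P_N\frak I G(y)-g(y)-\tilde f\|_\infty\le \epsilon/4+\epsilon/4,
\]
treating the matter as an $\epsilon/4$ uniform approximation of the operators plus an $\epsilon/4$ approximation of the free term. This is very short, but it tacitly evaluates both nonlinear operators at the same point, whereas $u_N$ and $\tilde u_N$ are fixed points of \emph{different} maps; the paper does not insert a stability argument to close that gap. Your route is longer but more careful: you linearize at $u_N$, invoke invertibility of $\mathbb 1-P_NT'(u_N)$ with a uniform-in-$N$ bound (using compactness of $T'(u)$, $1\notin\spec T'(u)$, $u_N\to u$, and the uniform boundedness of $P_N$ on $R$), and deduce a genuine Lipschitz stability estimate for the solution map, which then converts the $2\delta$ perturbation of the equation into a controlled perturbation of the solution. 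You also flag the branch-selection issue for $\tilde u$. In short, both arguments share the same skeleton, but your perturbation analysis supplies the rigor that the paper's one-line subtraction leaves implicit; the paper's version is quicker to read, while yours would actually survive a referee who asks why the two fixed points must be close.
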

	\begin{proof}
		For $N$ large enough, $P_N\frak I G(y) + g(y) + \tilde f = y$ has a solution $\tilde u_N$ and we can write 
		$$
		\|u - \tilde u_N\| \leq \|u - u_N\| + \|u_N - \tilde u_N\|,
		$$
		where $u_N$ is the solution to the projected equation $P_NT(y) + P_Nf = y$ as in the case of Theorem~\ref{thm:regular_T}. Then, we can choose $N$ large enough such that $\|u - u_N\|_\infty \leq  \|u - u_N\| < \epsilon/2$, since $\|u-u_N\| \longrightarrow 0$. From Lemma~\ref{lem:spec_approximation} we can find neural networks $G$ and $g$ such that 
		$$
		\|P_NT(y) - P_N \frak I G(y) - g(y)\|_\infty < \epsilon/4,
		$$
		for all $y$ in $X_N$, the projection space of continuous functions (in particular the projection of the H\"{o}lder space). 	Since $f$ is continuous, we can also choose (via the Universal Approximation Theorem) a neural network $\tilde f$ such that
		$$
		\|P_Nf - \tilde f\|_\infty < \epsilon/4. 
		$$
		Then, from the definition of $u_N$ and $\tilde u_N$ we have 
		$$
		\|u_N - \tilde u_N\|_\infty = \|P_NT(u_N) + P_Nf - P_N \frak I G(y) - g(y) - \tilde f\|_\infty \leq \epsilon/4 + \epsilon/4.
		$$
		Therefore, we find that for $N$ large enough, $\|u - \tilde u_N\|_\infty < \epsilon$, which completes the proof. 
	\end{proof}

	We can also derive the following result, which shows that we can at least locally approximate continuous bounded operators in H\"{o}lder spaces through the methods considered in this article, where $T$ is as above. 
	
	\begin{theorem}
		Let $T$ be an integeral operator (Fredholm or Volterra) defined on the H\"{o}lder space $C^{k,\beta}$. Then, for any choice of $\epsilon >0$, we can find $N>0$ and neural networks $G$ and $g$ for which
		\begin{eqnarray*}
			\|T(y) - P_N\frak I G(y) - g(y)\|_\infty < \epsilon,
		\end{eqnarray*}
		for all $y\in \mathbb B(0,r)$, where $\mathbb B(0,r)$ is an arbitrary radius-$r$ ball centered at zero in $C^{k,\beta}$. 
	\end{theorem}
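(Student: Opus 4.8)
The plan is to deduce this local, infinite-dimensional statement from Lemma~\ref{lem:spec_approximation}, which is finite-dimensional, by two successive reductions. Fix the ball $\mathbb B(0,r)\subset C^{k,\beta}$; by the standing regularity hypotheses on $T$ (which, as in Theorem~\ref{thm:regular_T}, guarantee that $T$ carries bounded subsets of $C^{k,\beta}$ into bounded subsets of $C^{k,\gamma}$ for some $\gamma>2\beta$), the image $T(\mathbb B(0,r))$ is bounded in $C^{k,\gamma}$. For $y\in\mathbb B(0,r)$ I would split
\[
\|T(y)-P_N\frak I G(y)-g(y)\|_\infty \le \|T(y)-P_NT(y)\|_\infty + \|P_NT(y)-P_NT(P_Ny)\|_\infty + \|P_NT(P_Ny)-P_N\frak I G(y)-g(y)\|_\infty ,
\]
where $G$ and $g$ are to be understood as ordinary finite-dimensional neural networks precomposed with the (linear, bounded) read-out of the first $N+1$ Chebyshev coefficients, i.e. $G(y):=F_N(P_Ny)$ and $g(y):=g_N(P_Ny)$ for networks $F_N,g_N:X_N\longrightarrow X_N$ still to be produced.

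The first term is controlled exactly as in the proof of Theorem~\ref{thm:regular_T}: since $T(y)\in C^{k,\gamma}$ with $\|T(y)\|_\gamma$ bounded uniformly over the ball, Jackson's Theorem gives $\|T(y)-P_NT(y)\|_\infty\longrightarrow 0$ uniformly in $y\in\mathbb B(0,r)$, so this term is below $\epsilon/3$ for $N$ large. For the second term, $\|P_NT(y)-P_NT(P_Ny)\|_\infty$ is at most the uniform-norm Lebesgue constant of $P_N$ (which grows only like $\log N$) times $\|T(y)-T(P_Ny)\|_\infty$; here I use that the kernel $G$ is continuous, hence uniformly continuous on the fixed compact $[-(r+1),r+1]\times[-1,1]^2$ containing the ranges of $y$ and of $P_Ny$, together with $\|y-P_Ny\|_\infty\le C(\log N)N^{-(k+\beta)}\|y\|_{k,\beta}\longrightarrow 0$ uniformly over the ball, to conclude $\|T(y)-T(P_Ny)\|_\infty\le 2|\lambda|\,\omega_G(\|y-P_Ny\|_\infty)\longrightarrow 0$ uniformly; the $\log N$ factor is absorbed by the polynomial decay, so this term too is below $\epsilon/3$ for $N$ large. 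The conceptual point, and the reason the theorem is only local, is that $P_Ny$ need not converge to $y$ in the H\"older norm (the obstruction of the Remark following Theorem~\ref{thm:regular_T}), but the Fredholm/Volterra operators under consideration are continuous with respect to the \emph{uniform} norm, and uniform-norm polynomial density in $C^{k,\beta}$ does hold and is uniform on bounded sets.

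For the third term I would invoke Lemma~\ref{lem:spec_approximation}, applied not to all of $X_N$ but to the set $P_N(\mathbb B(0,r))\subset X_N$, which is bounded and therefore, being finite-dimensional, relatively compact: the lemma then yields neural networks $F_N,g_N:X_N\longrightarrow X_N$ with $\|P_NT(z)-P_N\frak I F_N(z)-g_N(z)\|_\infty<\epsilon/3$ for every $z$ in that set, and taking $z=P_Ny$ with $G=F_N\circ P_N$, $g=g_N\circ P_N$ completes the estimate. The argument is carried out for Fredholm $T$; the Volterra case is the same after replacing $\int_{-1}^{1}$ by $\int_{-1}^{\alpha(t)}$ and the Fredholm spectral integration by its Volterra counterpart, exactly as in Lemma~\ref{lem:spec_approximation}. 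I expect the main obstacle to be the bookkeeping in the second term --- balancing the $\log N$ growth of the Lebesgue constant against the $N^{-(k+\beta)}$ decay of the interpolation error, and making sure every constant (the $C^{k,\gamma}$-bound on $T(\mathbb B(0,r))$, the modulus $\omega_G$, the Lebesgue constants) is genuinely independent of $y$ --- and, more conceptually, pinning down in what sense $G$ and $g$ are ``neural networks'' on an infinite-dimensional ball, namely honest finite-dimensional networks composed with the fixed linear truncation $P_N$, which is the content that separates this statement from Lemma~\ref{lem:spec_approximation}.
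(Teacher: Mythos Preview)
Your approach is correct and shares the same skeleton as the paper's: split off the projection error $\|T(y)-P_NT(y)\|_\infty$ via Jackson's Theorem, then invoke Lemma~\ref{lem:spec_approximation} for the remainder. The paper, however, uses only a two-term split,
\[
\|T(y)-P_N\frak I G(y)-g(y)\|_\infty \le \|T(y)-P_NT(y)\|_\infty + \|P_NT(y)-P_N\frak I G(y)-g(y)\|_\infty,
\]
bounds the first term by $M_k(y)/N^{k+\beta}$ (with $M_k(y)$ tacitly bounded over the ball), and quotes Lemma~\ref{lem:spec_approximation} directly for the second, without your middle term $\|P_NT(y)-P_NT(P_Ny)\|_\infty$. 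Your extra term is not redundant: Lemma~\ref{lem:spec_approximation} is stated only for $y\in X_N$, and the networks it produces read Chebyshev coefficients, so passing from $y\in C^{k,\beta}$ to $P_Ny\in X_N$ and then controlling the resulting discrepancy is exactly the step the paper leaves implicit. Your version is therefore more careful about the domain mismatch and makes explicit the Lebesgue-constant bookkeeping the paper suppresses; the cost is the modulus-of-continuity argument for the kernel, which the paper avoids by silently treating $G$ and $g$ as already defined on the infinite-dimensional ball.
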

	\begin{proof}
		For any $y$ and any choice of neural networks $G$ and $g$ we use the projection $P_N$ to obtain
		\begin{eqnarray*}
			\|T(y) - P_N\frak I G(y) - g(y)\|_\infty &=&	\|T(y) - P_NT(y) + P_NT(y) - P_N\frak I G(y) - g(y)\|_\infty\\
			&\leq& \|T(y) - P_NT(y)\|_\infty + \|P_NT(y) - P_N\frak I G(y) - g(y)\|_\infty\\
			&\leq& \frac{M_k(y)}{N^{k+\beta}} + \|P_NT(y) - P_N\frak I G(y) - g(y)\|_\infty.
		\end{eqnarray*}
		Let us now choose an arbitrary $r$ and select $N$ such that $\frac{r}{N^{k+\beta}} < \epsilon/2$.  Now, from Lemma~\ref{lem:spec_approximation} we can find $G$ and $g$ such that $\|P_NT(y) - P_N\frak I G(y) - g(y)\|_\infty< \epsilon/2$, which completes the proof.
	\end{proof}
	
	\begin{remark}
		{\rm 
			A generalization of the previous result to Sobolev spaces, using the results of \cite{Can_Quart}, would be of theoretical interest.
		}
	\end{remark}
	
	\begin{remark}
		{\rm 
			Lastly, we observe that in several cases in practice, one is interested in studying the equation 
			$$
			\lambda T(y) + f = y,
			$$
			where $\lambda$ is a nonzero parameter which is useful to determine When $\lambda$ is small enough, the existence and uniqueness of the solutions of the integral equation, e.g. through fixed point iterations, is guaranteed. See \cite{Topological}, Chapter 3, for examples. This is substantially equivalent to an eigenvalue problem for nonlinear operators (for nonzero eigenvalues), where the definition of eigenvalue of $T$ is defined in the ``naive'' way as in \cite{Topological}. See \cite{CaFuVi} for an overview of spectral theory for nonlinear operators.  
		}
	\end{remark}
	
	Regarding Question~\ref{question3}, an approach similar to the one pursued in \cite{NIDE} can be applied here to show that the gradient descent method can be applied to the architecture proposed in this article to obtain the neural networks $G$ and $g$. However, we do not adapt those methods to our case, but rather content ourselves with showing that gradient descend is possible by means of experimentation. 
	
	\section{Algorithm}\label{sec:algo}
	
	The considerations up to now do not take into account the fact that the approximated (projected) equation needs to be solved using some numerical scheme. Therefore, the theoretical bounds obtained in Section~\ref{sec:Theoretical} assume an exact solution of the projected equation. This is obviously not the case in practice, and we should introduce some numerical solver scheme to obtain the solutions $u_N$. 
	
	Our algorithm, therefore, is based on projecting the equation as described in Section~\ref{sec:Theoretical}, and then solving the corresponding equation. Gradient descent is performed on the error between the solution obtained, and the target function. The function $f$ is used for inizialization of the solver, and it is obtained from the available data. For instance, if the problem is to predict a dynamics from the initial $5$ time points, these $5$ points are used to obtain the function $f$, and solving the projected integral equation gives the predicted (approximated) dynamics. 
	
	We use the Chebyshev coefficients as inputs of our neural networks, and peform integration on the output function (identified with its coefficients) through the spectral integration $\frak I$ described in Section~\ref{sec:Mehtods}. To solve the projected equation, we use a fixed point iteration in the projected coefficient space. While the fixed point iteration is guaranteed to converge under the more restrictive assumptions of contractivity of the operator, we see that in practice such convergence is achieved in our experiments. In more delicate cases, one might introduce extra constraints to guarantee the convergence of the iterations. The method is schematically described in Algorithm~\ref{algo:SNIE}.
	
	\begin{algorithm}[h!]
		\caption{Algorithm for the spectral neural integral equation model.}
		\label{algo:SNIE}
		\begin{algorithmic}[1]
			\Require{Integrand neural network $G_\theta$ and $f$}
			\Comment{Initial model and function $f$ obtained from the available data}
			\Ensure{Trained neural network $G_\theta$} \Comment{Solution of Equation~\eqref{eqn:SNIE1D} projected in $N$-dimensional space}
			\State{Project dataset on $N$-dimensional space $X_N$ using $P_N = \sum_{k=0}^{N-1} \frac{2}{\pi}\int_{-1}^1 \frac{F(s)C_k(s)}{\sqrt{1-s^2}}ds \cdot C_k$}
			\State{Initialize iterations with function $u_0 := f$}
			\While{While $\|u_k - u_{k+1}\| > \tau$} \Comment{Solve Equation~\eqref{eqn:SNIE1D} iteratively in $X_N$}
			\State{$u_{k+1} = \frak IG_\theta( u_k) + f$ }
			\EndWhile
			\State{Solution $u_N$} \Comment{Output of the iterative procedure}
			\State{Convert $u_N$ to an approximate solution in original space, $y_N$} \Comment{Use Chebyshev interpolation}
			\State{Compute $\|y_N - u\|$} \Comment{Evaluate the error between $y_N$ and data $u$}
			\State{Use gradients computed while solving Equation~\eqref{eqn:SNIE1D} to perform gradient descent to optimize $G_\theta$}
		\end{algorithmic}
	\end{algorithm}

	It is possible to extend the algorithm to a higher dimensional version by considering a tensor product of Chebyshev grid points. This procedure has been widely employed in practice in engineering, see e.g. \cite{Cheb_tens} for an example. However, the resulting product polynomials do not satisfy the optimality guarantees of the 1D Chebyshev polynomials. Therefore, while the algorithm extends, mutatis mutandis, to a higher dimensional version, the theoretical analysis does not proceed in a straightforward manner. A more formal approach follows the work of Kowalski \cite{Kow1,Kow2} and Xu \cite{Xu1,Xu2}. In fact, they provide a way of constructing families of multivariate orthogonal polynomials that can serve as the basis for projection, playing the same role as the Chebyshev polynomials in this article for an analogous algorithm in higher dimensions. 
		
	\begin{remark}
			{\rm 
					It is possible to choose the tolerance parameter $\tau$ in Algorithm~\ref{algo:SNIE} based on the Chebyshev interpolation. The parameter $\tau$ controls the error in the coefficients of the Chebyshev basis to represent a target function. Therefore, relating the error on the coefficients to the interpolation error allows to meaningfully fix $\tau$ theoretically. However, we point out that it is usually enough to set $\tau$ based on empirical observation, and fix it as a hyperparameter. 
			}
	\end{remark}

	\section{Experiments}\label{sec:experiments} 
	
	We showcase the algorithm described in Section~\ref{sec:algo} on two classes of experiments. We consider a dataset consisting of solutions of an integral equation solved numerically. The kernel of the integral operator consists of matrices with hyperbolic functions entries in the variables $t$ and $s$, where the latter is the variable of integration.Moreover, in order to test the model's stability with respect to noise, the original integral equation solutions are perturbed by adding gaussian noise.
	The second dataset is a simulated fMRI dataset generated by solving a delayed ODE system that describes the behavior of neurons in the brain. 
	
	We show that the model can predict the dynamics, and show that the memory footprint of the model is very low, considering that it is solving an integral equation for each epoch. Moreover, we perform interpolation experiments to demonstrate that the model is very stable with respect to variations in the temporal domain of train and evaluation. We have chosen datasets with nonlocal behaviors, like integral equations and delay differential equations, to explore the capability of the model to model long range temporal dependencies in the datasets.

	\subsection{Integral Equations Dataset}
	
	In this experiment, the model is initialized using two points for each dynamics. The task is to predict the whole noisy dynamics, which consists of $100$ time points. To test the capability of the model to interpolate, we train our Spectral NIE on a downsampled dataset that contains half of the points of the curves. During evaluation, the model then outputs $100$ points, even though it has been trained on $50$ time points alone. Convergence of the model is considered to be $200$ epochs with no improvement on the validation set. The walltime of the experiment is the time elapsed between the beginning of the training to the convergence, as per the aforementioned criterion. An upper bound of one hour training has been fixed, and models that did not converge within this time are reported as having walltime $> 3600$. 
	
	In Table~\ref{Tab:IE_curve} we have reported two different Spectral NIE models, a large model with more than $100$K paramters, and a small model with only $316$ parameters. For both models, the $MC$ represents the number of points used to perform the Monte Carlo integration used in the interpolation. We see that increasing $MC$ by a $10$-fold factor we obtain a faster convergence time, but higher memory footprint.  
	
	For the other models, we have also tested ANIE (which is an implementation of the original NIE of \cite{ANIE,ANIE_NAT}) with two models, a large one and a small one, with number of parameters comparable to the corresponding Spectral NIE models. We find that for small models, the Spectral NIE model has a significantly better performance than ANIE, with smaller walltime and one order of magnitude better interpolation. For the larger models, we observe that Spectral NIE has better memory footprint and better walltime, but ANIE has a slighlty better overall performance in terms of error and interpolation error. Moreover, we see that the memory footprint of the Spectral NIE grows slowly with respect to the size of the model, since most of the computational expense is due to the Monte Carlo integration used for interpolation, and therefore independent on the model itself. ANIE's complexity, however, grows faster with respect to the size of the model. 
	
	Among the other models that we have tested for comparison, we also have ResNet, LSTM, Nerual Ordinary Differential Equation (its newer Latent ODE variant), and Fourier Neural Operator (FNO). No surprisingly, both the integral equation based models perform significantly better than these models, since they either do not implement nonlocal dynamics or, as in the case of LSTM, their nonlocality properties are limited. 
	
	In Table~\ref{Tab:IE_curve}, Error refers to the mean squared error of the predicted dynamics with respect to the real dynamics when trained over the full time interval (no downsampling). Interpolation error refers to the setup  where the dataset is downsampled for training, and the prediction of the model is performed over the full time interval. The experiment shows that the model is quite stable with respect to changes of time interval between train and evaluation. We think that such stability is due to the use of spectral methods, which automatically give a set of polynomials that can be evaluated on arbitrary points, resulting in good interpolation results. All experiments reported in Table~\ref{Tab:IE_curve} were performed on an Nvidia RTX 3090 GPU. 
	
	\begin{table}[h!]
		\centering
		\resizebox{\textwidth}{!}{\begin{tabular}{|c | c | c | c| c| c|}
				\hline
				Models & Memory (MiB) & Walltime (sec) & Error (MSE) & Interp Error (MSE) & Parameters\\
				\hline
				Spectral NIE (MC = $1K$) & $74$ & $165.21$ & $0.0022\pm 0.0016$ & $0.0028\pm 0.0032$ & $126498$ \\
				\hline 
				Spectral NIE (MC = $10K$)& $324$  & $408.04$  & $0.0032 \pm 0.0022$ & $0.0035 \pm 0.0024$ & $316$ \\
				\hline
				Spectral NIE (MC = $1K$)  & $68$ & $546.50$ & $0.0025\pm 0.0013$ & $0.0026\pm 0.0014$ & $316$\\
				\hline
				ANIE (Large) & $122$ & $564.94$ & $0.0014\pm 0.0002$ & $0.0015\pm 0.0003$  & $135363$\\
				\hline
				ANIE (Small) & $44$ & $1111.74$ & $0.0062\pm 0.0009$ & $0.01175\pm 0.0017$ & $531$\\
				\hline
				ResNet & $42$& $305.47$ & $0.4269\pm 0.0588$ & $0.2619\pm 0.0343$ & $30802$\\
				\hline
				LatentODE & $40$ & $>3600$ & $0.6342\pm 0.2838$ & $0.6290\pm 0.2813$ & $1428$\\
				\hline
				LSTM (init 10) & $102$ & $262.90$ & $0.1785\pm 0.0942$ & NA & $41802$\\
				\hline
				LSTM (init 20) & $122$ & $864.66$ & $0.0492\pm 0.0110$ & $0.1707 \pm 0.1285$ & $41802$\\
				\hline
				FNO1D (init 5) & $258$ & $>3600$ & $0.0292\pm 0.0285$ & $0.0994\pm 0.1207$ & $287843$\\   
				\hline 
				FNO1D (init 10) & $250$ & $3488.40$ & $0.0286\pm 0.0268$ & $0.1541\pm  0.2724$ & $288163$\\
				\hline
		\end{tabular}}
		\caption{Experimental results for the Integral Equations Dataset. Both errors are reported as mean squared errors.}
		\label{Tab:IE_curve}
	\end{table}\

		We report results from a more extensive study of the computational trade-offs between Monte Carlo sampling, convergence walltime, and model accuracy. The experiments were performed on a CPU using a medium-sized model with $67,934$ parameters, and the results are presented in Table~\ref{Tab:tradeoff}. We observe that although larger Monte Carlo sampling initially improves accuracy, the gains quickly saturate as sampling increases, while memory usage continues to grow. Moreover, runtime convergence also increases with larger sampling, since each epoch requires more computation time.
		
			\begin{table}[h!]
			\centering
			\begin{tabular}{|c | c | c |c|}
					\hline
					MC& Memory(MiB) & Walltime (sec) & Error (MSE) \\
					\hline
					$500$ & $86$ & $665$ & $0.0020 \pm 0.0012$ \\
					\hline
					 $1000$ & $133$ & $744$ & $0.0021 \pm 0.0012$\\
					 \hline
					 $2000$ & $235$ & $601$ & $0.0018 \pm 0.0010$\\
					 \hline
					 $4000$ & $429$ & $849$ & $0.0018 \pm 0.0009$\\
					 \hline
					 $8000$ & $815$ & $739$ & $0.0019 \pm 0.0010$\\
					 \hline
			\end{tabular}
			\caption{Experimental results for the Integral Equations Dataset on computational trade-off.}
			\label{Tab:tradeoff}
		\end{table}\
		
		To further determine the stability of the model with respect to irregularly sampled data, we consider an experimental setup where we irregularly sample the trajectories to 70\%, 60\%, 50\%, 40\%, 30\%, and 20\% of their original size during training, and predict the original curve during evaluation. Therefore, this is another interpolation test where the model accesses irregular time stamps of the data. The full dynamics is never seen during training by the model, as it is trained to fit only the available downsampled data. Effectively, the interpolation component of evaluation is a generalization task. When the percentage of sampled data becomes too low, the error increases significantly, which is an expected behavior considering that the task becomes increasingly harder, and for sufficiently low available data the theoretical optimal interpolation error bounds increase. The model shows good interpolation capabilities with respect to irregular downsampling of the data, until the downsampling grows excessively. The results of the experiment are reported in Table~\ref{Tab:IE_irregular}. 
		
	\begin{table}[h!]
		\centering
		\resizebox{\textwidth}{!}{\begin{tabular}{|c | c | c |c|c|c|}
				\hline
				70\% & 60\% & 50\% & 40\%& 30\%& 20\%\\
				\hline
				$0.0022 \pm 0.0018$ & $0.0022 \pm 0.0018$ & $0.022\pm 0.0018$& $0.0022 \pm 0.0014$& $0.0022 \pm 0.0015$ & $0.0167 \pm 0.0069$\\
				\hline
		\end{tabular}}
	\caption{Experimental results for the Integral Equations Dataset on irregularly sampled interpolation task.}
	\label{Tab:IE_irregular}
	\end{table}\

	\subsection{Simulated fMRI Dataset}
	
	We consider now a simulated fMRI dataset. This is generated using the package neurolib \cite{neurolib} through a system of delay differential equations that simulate the stimulus of brain regions. The dynamics is $80$ dimensional, and each dimension refers to a brain region. Moreover, the system has a high degree of nonlocality.
	
	We consider different types of initializations, where we assume that $3$, $4$ or $7$ time points are available for initialization of the model. Clearly, as the number of available points for initialization increases, the models' predictions become more accurate, since the task is simpler.
	
	\begin{table}[h!]
		\centering
		\resizebox{\textwidth}{!}{\begin{tabular}{|c | c | c| c|c|c|}
				\hline
				N init	& Spectral NIE & ANIE & NODE & LSTM & FNO1D\\
				\hline
				3  & $0.0027 \pm 0.0006$  & $0.0090 \pm 0.0092$  & $0.1189\pm 0.0437$ & $0.0871\pm 0.0146$& $0.0794\pm 0.0150$\\
				\hline
				4  & $0.0025 \pm 0.0007$ & $0.0096\pm 0.0097$  &$0.1102\pm 0.0243$ & $0.0819\pm 0.0145$ & $0.0726 \pm 0.0130$\\
				\hline
				7  & $0.0012 \pm 0.0006$ &  $0.0069\pm 0.0113$ &$0.1138\pm0.0302$ & $0.0810\pm0.0136$ & $0.0576\pm 0.0100$\\
				\hline
		\end{tabular}}
		\caption{Experimental results for fMRI simulated dataset: all reported values refer to mean squared error. The models are initialized with varying amount of time points.}
		\label{Tab:fMRI}
	\end{table}\
	
	Table~\ref{Tab:fMRI} shows the results of the experiment, where we have compared the Spectral NIE to ANIE, NODE, LSTM and FNO1D. In all experiments, the number of parameters of the model are comparable, to ensure a more meaningful comparison. Once again, since the dynamics is highly nonlocal, we expect that integral equation based models perform better. This is indeed the case, as shown in the table. Moreover, we see that Spectral NIE performs significantly better than ANIE in this experiment, and the gain in using our spectral approach is not only in computational cost and convergence time, but also in accuracy.

	Example predictions for the Spectral NIE for $3$, $4$ and $7$ initialization points are given in Figure~\ref{fig:fMRI_7init}, Figure~\ref{fig:fMRI_4init} and Figure~\ref{fig:fMRI_3init}, respectively. In all figures, the top dynamics represents the prediction, and the bottom dynamics represents the ground truth. The $\vec{x}$-axis represents all the $80$ brain locations, while the $\vec{y}$-axis refers to the $20$ time points per dynamics. 
	
	\begin{figure}[htb]
		\begin{center}
			\includegraphics[width=5in]{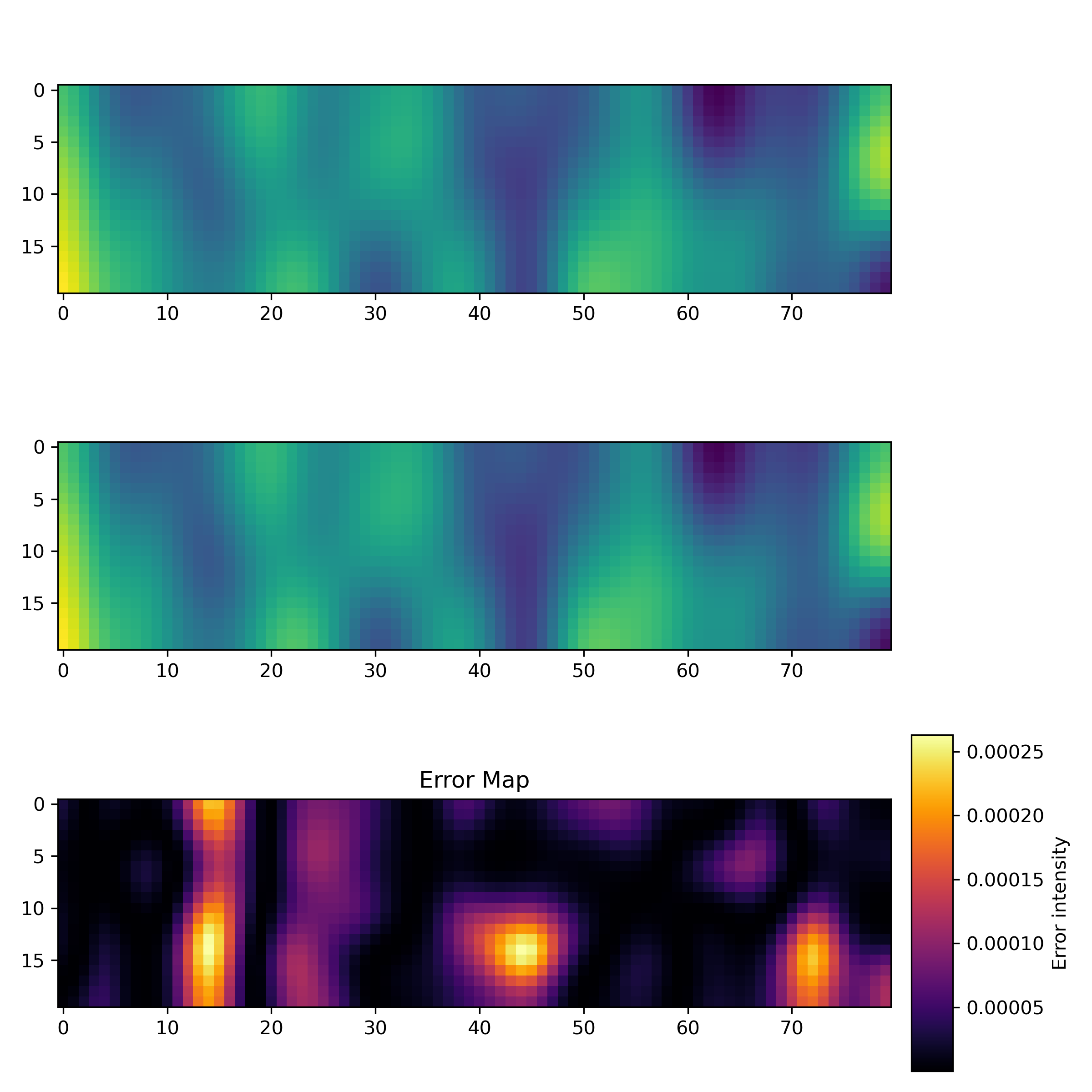}
		\end{center}
		\caption{Example prediction for Spectral NIE on fMRI Dataset with $7$ time points used for initialization. Top: Prediction. Middle: Ground truth. Bottom: Error.}
		\label{fig:fMRI_7init}
	\end{figure}
	
	\begin{figure}[htb]
		\begin{center}
			\includegraphics[width=5in]{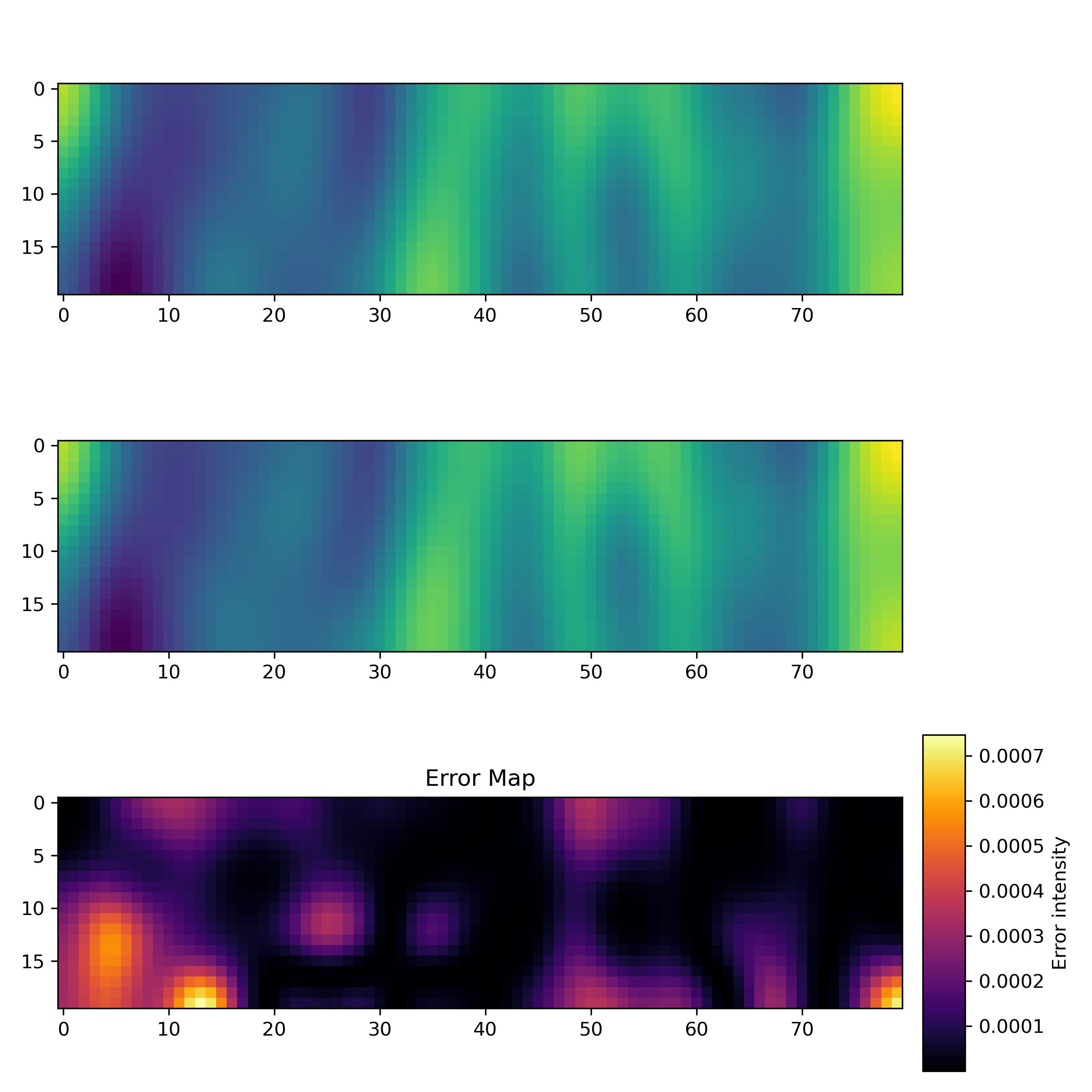}
		\end{center}
		\caption{Example prediction for Spectral NIE on fMRI Dataset with $4$ time points used for initialization. Top: Prediction. Middle: Ground truth. Bottom: Error.}
		\label{fig:fMRI_4init}
	\end{figure}
	
	\begin{figure}[htb]
		\begin{center}
			\includegraphics[width=5in]{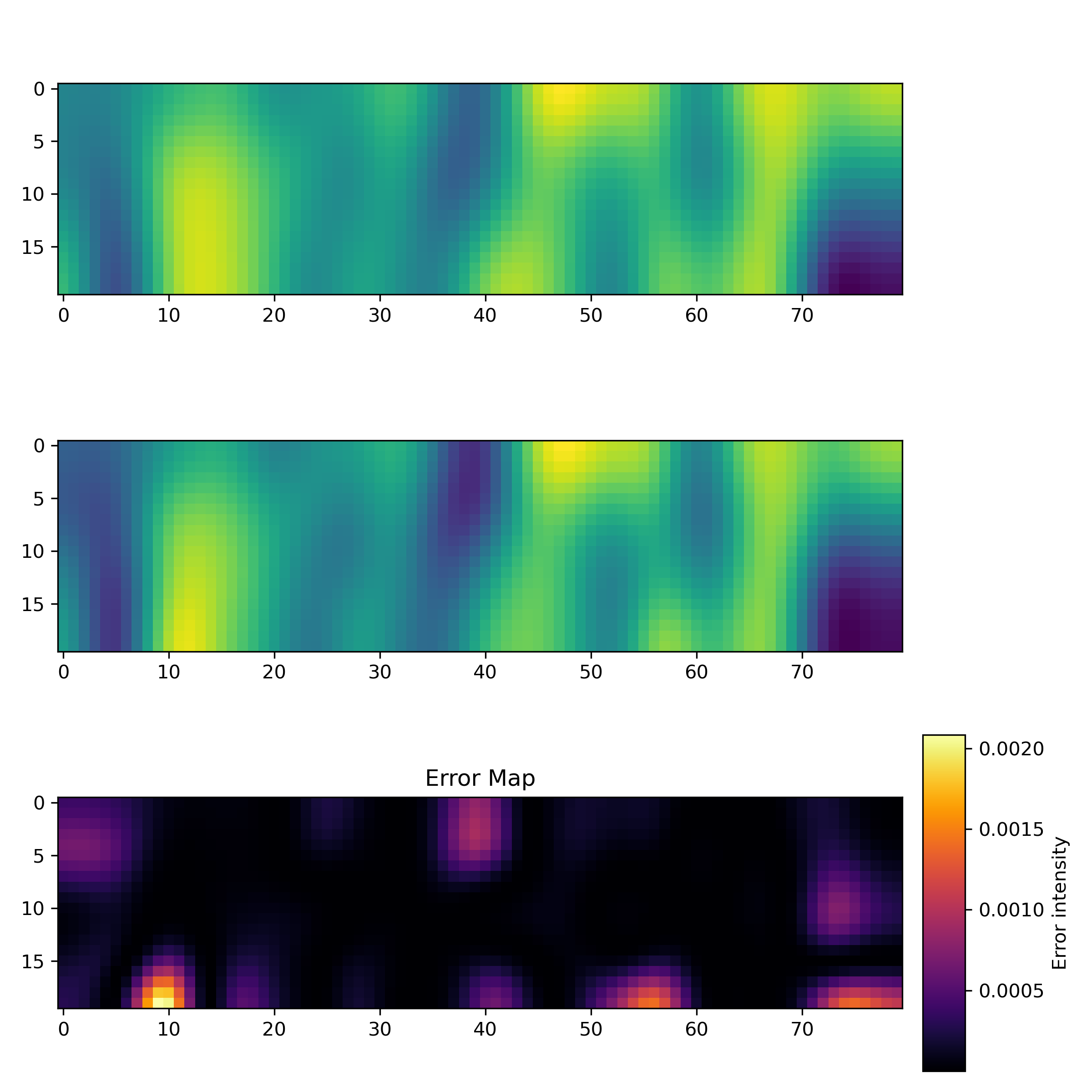}
		\end{center}
		\caption{Example prediction for Spectral NIE on fMRI Dataset with $7$ time points used for initialization. Top: Prediction. Middle: Ground truth. Bottom: Error.}
		\label{fig:fMRI_3init}
	\end{figure}

		We further investigate the runtime-accuracy trade-off on the fMRI dataset, with results summarized in Table~\ref{Tab:tradeoff_fMRI}. All experiments were run on a CPU using a model with $348,648$ parameters. We observe that walltime convergence decreases as the number of Monte Carlo samples increases, before stabilizing. This behavior appears to stem from improved model stability, which allows convergence in fewer training epochs. However, each epoch becomes slower due to the higher computational cost. Overall, accuracy improves markedly with larger Monte Carlo samples.
		
			\begin{table}[h!]
				\centering
				\begin{tabular}{|c | c | c |c|}
					\hline
					MC& Memory(MiB) & Walltime (sec) & Error (MSE) \\
					\hline
					$500$ & $489$ & $431$ & $0.0021 \pm 0.0009$ \\
					\hline
					$1000$ & $503$ & $405$ & $0.0017 \pm 0.0008$\\
					\hline
					$2000$ & $525$ & $369$ & $0.0015 \pm 0.0009$\\
					\hline
					$4000$ & $591$ & $360$ & $0.0013 \pm 0.0006$\\
					\hline
					$8000$ & $602$ & $369$ & $0.0012 \pm 0.0005$\\
					\hline
				\end{tabular}
				\caption{Experimental results for the fMRI Dataset on computational trade-off.}
				\label{Tab:tradeoff_fMRI}
			\end{table}\

			\section{Conclusions}
			
			We conclude by providing some potential applications of the framework introduced in this article. A natural class of problems that this work can be applied to, regards neuroscience. EEG and fMRI have signals that are highly nonlocal, and therefore modeling the corresponding operators in a nonlocal manner, as with integral equations, is meaningful. While fMRI is a higher dimensional method, with 2 or 3 spatial dimensions and a temporal one, it is often the case that only the dynamics of certain brain areas are of interests, leading to a temporal dynamics with one channel per brain area, as in the fMRI dataset considered in this article. 
			Similarly, problems in population dynamics where delay and long-range terms are not negligible lead to nonlocal equations that can be modeled via integral equations. Virus spread modling  represents an example of such situation.  Lastly, 1D Boltzmann equations, or nonlocal diffusion problems where one dimension dominates the equation term are a class of examples of problems in physics that can be treated using the framework in this article.

	\section*{Acknowledgement}
	The author acknowledges support from the NIH under the grant R16GM154734.

\end{document}